\documentclass[12pt,preprint]{amsart}

\usepackage{amssymb,amsfonts,amsthm,amsmath,amscd}

\usepackage{enumerate}
\usepackage{verbatim}

\usepackage{pgf,tikz}
\usetikzlibrary{arrows}

\theoremstyle{plain}
\newtheorem{theorem}{Theorem}
\newtheorem{proposition}{Proposition}

\newtheorem{corollary}{Corollary}

\theoremstyle{definition}
\newtheorem{definition}{Definition}

\newtheorem{remark}{Remark}

\setlength{\textwidth}{6.5in} 
\setlength{\oddsidemargin}{0.05in}
\setlength{\evensidemargin}{0.05in}
\setlength{\textheight}{8.5in}  \setlength{\hfuzz}{5pt}

\begin{document}

\title{Equitable Candy Sharing}

\author{Grant Cairns}

\maketitle

\begin{abstract}
Children, sitting in a circle, each have a nonnegative number of candies in front of them. A whistle is blown and each child with more than one candy passes one candy to the left and one to the right. The sharing process is repeated until a fixed state is attained, or the system enters a periodic cycle. This paper treats the case where  the total number of candies equals the number of children. For a given initial distribution of candies, a necessary and sufficient condition is given for the system to ultimately attain the equitable distribution in which each child has one candy.\end{abstract}

\section{Introduction.}

In the simplest form of \emph{candy sharing} games, children sit in a circle, and they each initially have a nonnegative number of candies in front of them, the number possibly differing from child to child. Then a whistle is blown and the children pass some of the candies to their immediate neighbours, to the left and to the right. The number of candies that each child passes depends only on how many candies they have, and differs according to the variant of the game. The process is repeated several times; at each stage, the whistle is blown and the candies shared. Since the system is a finite one, ultimately either the process terminates in a fixed state (e.g., the children all have the same number of candies), or the system enters a cycle of some finite length greater than one.
The basic problem is to determine which of these two outcomes is attained, given the initial distribution of candies.
Variations of the candy sharing game have been used in competitions, and extension activities for students, dating back at least to 1963; as mentioned in \cite{IT}. 

In this paper, we consider one of the commonly studied candy sharing games; see \cite[Appendix VI]{T}. It has the following sharing rule:

\smallskip
\emph{Each child with more than one candy passes one candy to the left and one to the right}.
\smallskip

\noindent
The game with $n$ of children and $m$ candies terminates in a fixed state when $m<n $  \cite[Appendix VI]{T} and when $m\geq 3 n$ \cite{KK}.  In this paper, we examine the case $m=n$. We will call this the \emph{balanced candy sharing game}. In this case, the only possible fixed state is where each child has one candy. To give an example of the kind of cycle that can occur, number the children cyclically in clockwise order from 1 to $n$, and for each $i$ let $c_i$ denote the number of candies held by child $i$. So the string $(c_1,\dots,c_n)$ represents the state of the system. Consider the state
\[
W=(0,2,\underbrace{1,1,\dots,1}_{(n-2)\, 1\text{s}}).
\]
Given any state $S$, let $f(S)$ denote the state after the next iteration of the game. So
\[
f(W)=(1,0,2,\underbrace{1,1,\dots,1}_{(n-3)\,1\text{s}})
\quad \text{and}\quad f^2(W)=(1,1,0,2,\underbrace{1,1,\dots,1}_{(n-4)\, 1\text{s}}),
\]
and so forth. Thus the iterates of $W$ form a travelling wave that moves clockwise around the circle; see Figure \ref{fig}.  

\begin{figure}
\begin{center}
\begin{tikzpicture}[scale=0.35,auto];
\pgfmathsetmacro {\r}{3.3};
\pgfmathsetmacro {\t}{3.8};
\draw[thick,red] (0,0) circle (3);

\pgfmathsetmacro {\a}{330};

\draw[rotate=\a,fill,blue] (0,\t) ellipse (.3 and .2);
\draw[rotate=\a-4,fill] (0,\t)--(.2,\t+.2)--(.2,\t-.2);
\draw[rotate=\a+4,fill] (0,\t)--(-.2,\t+.2)--(-.2,\t-.2);

\pgfmathsetmacro {\a}{30};

\draw[rotate=\a,fill,blue] (0,\r) ellipse (.3 and .2);
\draw[rotate=\a-5,fill] (0,\r)--(.2,\r+.2)--(.2,\r-.2);
\draw[rotate=\a+5,fill] (0,\r)--(-.2,\r+.2)--(-.2,\r-.2);

\pgfmathsetmacro {\a}{60};

\draw[rotate=\a,fill,blue] (0,\r) ellipse (.3 and .2);
\draw[rotate=\a-5,fill] (0,\r)--(.2,\r+.2)--(.2,\r-.2);
\draw[rotate=\a+5,fill] (0,\r)--(-.2,\r+.2)--(-.2,\r-.2);

\pgfmathsetmacro {\a}{90};

\draw[rotate=\a,fill,blue] (0,\r) ellipse (.3 and .2);
\draw[rotate=\a-5,fill] (0,\r)--(.2,\r+.2)--(.2,\r-.2);
\draw[rotate=\a+5,fill] (0,\r)--(-.2,\r+.2)--(-.2,\r-.2);

\pgfmathsetmacro {\a}{120};

\draw[rotate=\a,fill,blue] (0,\r) ellipse (.3 and .2);
\draw[rotate=\a-5,fill] (0,\r)--(.2,\r+.2)--(.2,\r-.2);
\draw[rotate=\a+5,fill] (0,\r)--(-.2,\r+.2)--(-.2,\r-.2);

\pgfmathsetmacro {\a}{150};

\draw[rotate=\a,fill,blue] (0,\r) ellipse (.3 and .2);
\draw[rotate=\a-5,fill] (0,\r)--(.2,\r+.2)--(.2,\r-.2);
\draw[rotate=\a+5,fill] (0,\r)--(-.2,\r+.2)--(-.2,\r-.2);

\pgfmathsetmacro {\a}{180};

\draw[rotate=\a,fill,blue] (0,\r) ellipse (.3 and .2);
\draw[rotate=\a-5,fill] (0,\r)--(.2,\r+.2)--(.2,\r-.2);
\draw[rotate=\a+5,fill] (0,\r)--(-.2,\r+.2)--(-.2,\r-.2);

\pgfmathsetmacro {\a}{210};

\draw[rotate=\a,fill,blue] (0,\r) ellipse (.3 and .2);
\draw[rotate=\a-5,fill] (0,\r)--(.2,\r+.2)--(.2,\r-.2);
\draw[rotate=\a+5,fill] (0,\r)--(-.2,\r+.2)--(-.2,\r-.2);

\pgfmathsetmacro {\a}{240};

\draw[rotate=\a,fill,blue] (0,\r) ellipse (.3 and .2);
\draw[rotate=\a-5,fill] (0,\r)--(.2,\r+.2)--(.2,\r-.2);
\draw[rotate=\a+5,fill] (0,\r)--(-.2,\r+.2)--(-.2,\r-.2);

\pgfmathsetmacro {\a}{270};

\draw[rotate=\a,fill,blue] (0,\r) ellipse (.3 and .2);
\draw[rotate=\a-5,fill] (0,\r)--(.2,\r+.2)--(.2,\r-.2);
\draw[rotate=\a+5,fill] (0,\r)--(-.2,\r+.2)--(-.2,\r-.2);

\pgfmathsetmacro {\a}{300};

\draw[rotate=\a,fill,blue] (0,\r) ellipse (.3 and .2);
\draw[rotate=\a-5,fill] (0,\r)--(.2,\r+.2)--(.2,\r-.2);
\draw[rotate=\a+5,fill] (0,\r)--(-.2,\r+.2)--(-.2,\r-.2);

\pgfmathsetmacro {\a}{330};

\draw[rotate=\a,fill,blue] (0,\r) ellipse (.3 and .2);
\draw[rotate=\a-5,fill] (0,\r)--(.2,\r+.2)--(.2,\r-.2);
\draw[rotate=\a+5,fill] (0,\r)--(-.2,\r+.2)--(-.2,\r-.2);

\end{tikzpicture}
\hskip.7cm
\begin{tikzpicture}[scale=0.35,auto];
\pgfmathsetmacro {\r}{3.3};
\pgfmathsetmacro {\t}{3.8};
\draw[thick,red] (0,0) circle (3);

\pgfmathsetmacro {\a}{0};

\draw[rotate=\a,fill,blue] (0,\r) ellipse (.3 and .2);
\draw[rotate=\a-5,fill] (0,\r)--(.2,\r+.2)--(.2,\r-.2);
\draw[rotate=\a+5,fill] (0,\r)--(-.2,\r+.2)--(-.2,\r-.2);

\pgfmathsetmacro {\a}{30};

\draw[rotate=\a,fill,blue] (0,\r) ellipse (.3 and .2);
\draw[rotate=\a-5,fill] (0,\r)--(.2,\r+.2)--(.2,\r-.2);
\draw[rotate=\a+5,fill] (0,\r)--(-.2,\r+.2)--(-.2,\r-.2);

\pgfmathsetmacro {\a}{60};

\draw[rotate=\a,fill,blue] (0,\r) ellipse (.3 and .2);
\draw[rotate=\a-5,fill] (0,\r)--(.2,\r+.2)--(.2,\r-.2);
\draw[rotate=\a+5,fill] (0,\r)--(-.2,\r+.2)--(-.2,\r-.2);

\pgfmathsetmacro {\a}{90};

\draw[rotate=\a,fill,blue] (0,\r) ellipse (.3 and .2);
\draw[rotate=\a-5,fill] (0,\r)--(.2,\r+.2)--(.2,\r-.2);
\draw[rotate=\a+5,fill] (0,\r)--(-.2,\r+.2)--(-.2,\r-.2);

\pgfmathsetmacro {\a}{120};

\draw[rotate=\a,fill,blue] (0,\r) ellipse (.3 and .2);
\draw[rotate=\a-5,fill] (0,\r)--(.2,\r+.2)--(.2,\r-.2);
\draw[rotate=\a+5,fill] (0,\r)--(-.2,\r+.2)--(-.2,\r-.2);

\pgfmathsetmacro {\a}{150};

\draw[rotate=\a,fill,blue] (0,\r) ellipse (.3 and .2);
\draw[rotate=\a-5,fill] (0,\r)--(.2,\r+.2)--(.2,\r-.2);
\draw[rotate=\a+5,fill] (0,\r)--(-.2,\r+.2)--(-.2,\r-.2);

\pgfmathsetmacro {\a}{180};

\draw[rotate=\a,fill,blue] (0,\r) ellipse (.3 and .2);
\draw[rotate=\a-5,fill] (0,\r)--(.2,\r+.2)--(.2,\r-.2);
\draw[rotate=\a+5,fill] (0,\r)--(-.2,\r+.2)--(-.2,\r-.2);

\pgfmathsetmacro {\a}{210};

\draw[rotate=\a,fill,blue] (0,\r) ellipse (.3 and .2);
\draw[rotate=\a-5,fill] (0,\r)--(.2,\r+.2)--(.2,\r-.2);
\draw[rotate=\a+5,fill] (0,\r)--(-.2,\r+.2)--(-.2,\r-.2);

\pgfmathsetmacro {\a}{240};

\draw[rotate=\a,fill,blue] (0,\r) ellipse (.3 and .2);
\draw[rotate=\a-5,fill] (0,\r)--(.2,\r+.2)--(.2,\r-.2);
\draw[rotate=\a+5,fill] (0,\r)--(-.2,\r+.2)--(-.2,\r-.2);

\pgfmathsetmacro {\a}{270};

\draw[rotate=\a,fill,blue] (0,\r) ellipse (.3 and .2);
\draw[rotate=\a-5,fill] (0,\r)--(.2,\r+.2)--(.2,\r-.2);
\draw[rotate=\a+5,fill] (0,\r)--(-.2,\r+.2)--(-.2,\r-.2);

\pgfmathsetmacro {\a}{300};

\draw[rotate=\a,fill,blue] (0,\r) ellipse (.3 and .2);
\draw[rotate=\a-5,fill] (0,\r)--(.2,\r+.2)--(.2,\r-.2);
\draw[rotate=\a+5,fill] (0,\r)--(-.2,\r+.2)--(-.2,\r-.2);

\pgfmathsetmacro {\a}{300};

\draw[rotate=\a,fill,blue] (0,\t) ellipse (.3 and .2);
\draw[rotate=\a-4,fill] (0,\t)--(.2,\t+.2)--(.2,\t-.2);
\draw[rotate=\a+4,fill] (0,\t)--(-.2,\t+.2)--(-.2,\t-.2);

\end{tikzpicture}
\hskip.7cm
\begin{tikzpicture}[scale=0.35,auto];
\pgfmathsetmacro {\r}{3.3};
\pgfmathsetmacro {\t}{3.8};
\draw[thick,red] (0,0) circle (3);

\pgfmathsetmacro {\a}{0};

\draw[rotate=\a,fill,blue] (0,\r) ellipse (.3 and .2);
\draw[rotate=\a-5,fill] (0,\r)--(.2,\r+.2)--(.2,\r-.2);
\draw[rotate=\a+5,fill] (0,\r)--(-.2,\r+.2)--(-.2,\r-.2);

\pgfmathsetmacro {\a}{30};

\draw[rotate=\a,fill,blue] (0,\r) ellipse (.3 and .2);
\draw[rotate=\a-5,fill] (0,\r)--(.2,\r+.2)--(.2,\r-.2);
\draw[rotate=\a+5,fill] (0,\r)--(-.2,\r+.2)--(-.2,\r-.2);

\pgfmathsetmacro {\a}{60};

\draw[rotate=\a,fill,blue] (0,\r) ellipse (.3 and .2);
\draw[rotate=\a-5,fill] (0,\r)--(.2,\r+.2)--(.2,\r-.2);
\draw[rotate=\a+5,fill] (0,\r)--(-.2,\r+.2)--(-.2,\r-.2);

\pgfmathsetmacro {\a}{90};

\draw[rotate=\a,fill,blue] (0,\r) ellipse (.3 and .2);
\draw[rotate=\a-5,fill] (0,\r)--(.2,\r+.2)--(.2,\r-.2);
\draw[rotate=\a+5,fill] (0,\r)--(-.2,\r+.2)--(-.2,\r-.2);

\pgfmathsetmacro {\a}{120};

\draw[rotate=\a,fill,blue] (0,\r) ellipse (.3 and .2);
\draw[rotate=\a-5,fill] (0,\r)--(.2,\r+.2)--(.2,\r-.2);
\draw[rotate=\a+5,fill] (0,\r)--(-.2,\r+.2)--(-.2,\r-.2);

\pgfmathsetmacro {\a}{150};

\draw[rotate=\a,fill,blue] (0,\r) ellipse (.3 and .2);
\draw[rotate=\a-5,fill] (0,\r)--(.2,\r+.2)--(.2,\r-.2);
\draw[rotate=\a+5,fill] (0,\r)--(-.2,\r+.2)--(-.2,\r-.2);

\pgfmathsetmacro {\a}{180};

\draw[rotate=\a,fill,blue] (0,\r) ellipse (.3 and .2);
\draw[rotate=\a-5,fill] (0,\r)--(.2,\r+.2)--(.2,\r-.2);
\draw[rotate=\a+5,fill] (0,\r)--(-.2,\r+.2)--(-.2,\r-.2);

\pgfmathsetmacro {\a}{210};

\draw[rotate=\a,fill,blue] (0,\r) ellipse (.3 and .2);
\draw[rotate=\a-5,fill] (0,\r)--(.2,\r+.2)--(.2,\r-.2);
\draw[rotate=\a+5,fill] (0,\r)--(-.2,\r+.2)--(-.2,\r-.2);

\pgfmathsetmacro {\a}{240};

\draw[rotate=\a,fill,blue] (0,\r) ellipse (.3 and .2);
\draw[rotate=\a-5,fill] (0,\r)--(.2,\r+.2)--(.2,\r-.2);
\draw[rotate=\a+5,fill] (0,\r)--(-.2,\r+.2)--(-.2,\r-.2);

\pgfmathsetmacro {\a}{270};

\draw[rotate=\a,fill,blue] (0,\t) ellipse (.3 and .2);
\draw[rotate=\a-4,fill] (0,\t)--(.2,\t+.2)--(.2,\t-.2);
\draw[rotate=\a+4,fill] (0,\t)--(-.2,\t+.2)--(-.2,\t-.2);

\pgfmathsetmacro {\a}{270};

\draw[rotate=\a,fill,blue] (0,\r) ellipse (.3 and .2);
\draw[rotate=\a-5,fill] (0,\r)--(.2,\r+.2)--(.2,\r-.2);
\draw[rotate=\a+5,fill] (0,\r)--(-.2,\r+.2)--(-.2,\r-.2);

\pgfmathsetmacro {\a}{330};

\draw[rotate=\a,fill,blue] (0,\r) ellipse (.3 and .2);
\draw[rotate=\a-5,fill] (0,\r)--(.2,\r+.2)--(.2,\r-.2);
\draw[rotate=\a+5,fill] (0,\r)--(-.2,\r+.2)--(-.2,\r-.2);

\end{tikzpicture}
\hskip.7cm
\begin{tikzpicture}[scale=0.35,auto];
\pgfmathsetmacro {\r}{3.3};
\pgfmathsetmacro {\t}{3.8};
\draw[thick,red] (0,0) circle (3);

\pgfmathsetmacro {\a}{0};

\draw[rotate=\a,fill,blue] (0,\r) ellipse (.3 and .2);
\draw[rotate=\a-5,fill] (0,\r)--(.2,\r+.2)--(.2,\r-.2);
\draw[rotate=\a+5,fill] (0,\r)--(-.2,\r+.2)--(-.2,\r-.2);

\pgfmathsetmacro {\a}{30};

\draw[rotate=\a,fill,blue] (0,\r) ellipse (.3 and .2);
\draw[rotate=\a-5,fill] (0,\r)--(.2,\r+.2)--(.2,\r-.2);
\draw[rotate=\a+5,fill] (0,\r)--(-.2,\r+.2)--(-.2,\r-.2);

\pgfmathsetmacro {\a}{60};

\draw[rotate=\a,fill,blue] (0,\r) ellipse (.3 and .2);
\draw[rotate=\a-5,fill] (0,\r)--(.2,\r+.2)--(.2,\r-.2);
\draw[rotate=\a+5,fill] (0,\r)--(-.2,\r+.2)--(-.2,\r-.2);

\pgfmathsetmacro {\a}{90};

\draw[rotate=\a,fill,blue] (0,\r) ellipse (.3 and .2);
\draw[rotate=\a-5,fill] (0,\r)--(.2,\r+.2)--(.2,\r-.2);
\draw[rotate=\a+5,fill] (0,\r)--(-.2,\r+.2)--(-.2,\r-.2);

\pgfmathsetmacro {\a}{120};

\draw[rotate=\a,fill,blue] (0,\r) ellipse (.3 and .2);
\draw[rotate=\a-5,fill] (0,\r)--(.2,\r+.2)--(.2,\r-.2);
\draw[rotate=\a+5,fill] (0,\r)--(-.2,\r+.2)--(-.2,\r-.2);

\pgfmathsetmacro {\a}{150};

\draw[rotate=\a,fill,blue] (0,\r) ellipse (.3 and .2);
\draw[rotate=\a-5,fill] (0,\r)--(.2,\r+.2)--(.2,\r-.2);
\draw[rotate=\a+5,fill] (0,\r)--(-.2,\r+.2)--(-.2,\r-.2);

\pgfmathsetmacro {\a}{180};

\draw[rotate=\a,fill,blue] (0,\r) ellipse (.3 and .2);
\draw[rotate=\a-5,fill] (0,\r)--(.2,\r+.2)--(.2,\r-.2);
\draw[rotate=\a+5,fill] (0,\r)--(-.2,\r+.2)--(-.2,\r-.2);

\pgfmathsetmacro {\a}{210};

\draw[rotate=\a,fill,blue] (0,\r) ellipse (.3 and .2);
\draw[rotate=\a-5,fill] (0,\r)--(.2,\r+.2)--(.2,\r-.2);
\draw[rotate=\a+5,fill] (0,\r)--(-.2,\r+.2)--(-.2,\r-.2);

\pgfmathsetmacro {\a}{240};

\draw[rotate=\a,fill,blue] (0,\r) ellipse (.3 and .2);
\draw[rotate=\a-5,fill] (0,\r)--(.2,\r+.2)--(.2,\r-.2);
\draw[rotate=\a+5,fill] (0,\r)--(-.2,\r+.2)--(-.2,\r-.2);

\pgfmathsetmacro {\a}{240};

\draw[rotate=\a,fill,blue] (0,\t) ellipse (.3 and .2);
\draw[rotate=\a-4,fill] (0,\t)--(.2,\t+.2)--(.2,\t-.2);
\draw[rotate=\a+4,fill] (0,\t)--(-.2,\t+.2)--(-.2,\t-.2);

\pgfmathsetmacro {\a}{300};

\draw[rotate=\a,fill,blue] (0,\r) ellipse (.3 and .2);
\draw[rotate=\a-5,fill] (0,\r)--(.2,\r+.2)--(.2,\r-.2);
\draw[rotate=\a+5,fill] (0,\r)--(-.2,\r+.2)--(-.2,\r-.2);

\pgfmathsetmacro {\a}{330};

\draw[rotate=\a,fill,blue] (0,\r) ellipse (.3 and .2);
\draw[rotate=\a-5,fill] (0,\r)--(.2,\r+.2)--(.2,\r-.2);
\draw[rotate=\a+5,fill] (0,\r)--(-.2,\r+.2)--(-.2,\r-.2);

\end{tikzpicture}
\end{center}
\caption{Clockwise travelling wave of candies}\label{fig}
\end{figure}
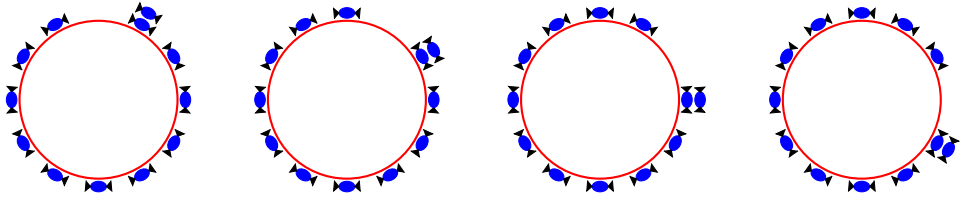

Recall that a state $S$ is \emph{periodic}  if $f^i(S)=S$, for some natural number $i$, and the \emph{period} of $S$ is the smallest such $i$. So the fixed state $I$ is periodic, with (albeit trivial) period $1$. The periodic state $W$ given above has period $n$.

To give slightly more complicated examples, let us introduce some more notation. Let 
\[
P=(0,2), \quad \bar P=(2,0), \quad I=(1),
\]
Furthermore, for strings $A,B$, let us denote by $AB$ the string obtained by concatenating $A$ and $B$. For example, $W=PI^{n-2}$. Consider natural numbers $i_1,i_2, \dots, i_\ell$ and $j_1,j_2, \dots, j_\ell$ with $2(i_1+i_2+ \dots+ i_\ell)+j_1+j_2+ \dots+ j_\ell=n$. Notice that the strings
$P^{i_1} I^{j_i} P^{i_2} I^{j_2}  \dots P^{i_\ell} I^{j_\ell}$
travel clockwise around the circle. They are periodic, where the  period is some divisor of $n$. The strings
of the form
$\bar P^{i_1} I^{j_i} \bar P^{i_2} I^{j_2}  \dots \bar P^{i_\ell} I^{j_\ell}$ have the same property, but travel anti-clockwise around the circle. When $n$ is even, say $n=2k$, the strings $P^k$ and $\bar P^k=f(P^k)$ have period 2.
We claim that this is effectively the complete list of periodic states.

\begin{theorem}\label{th}
Up to cyclic rotations, the only periodic states of the balanced candy sharing game with $n$ children are:
\begin{enumerate}[\rm(a)]
\item[(a)]  the strings of the form
$P^{i_1} I^{j_i} P^{i_2} I^{j_2}  \dots P^{i_\ell} I^{j_\ell}$, where $j_1,j_2,\dots,j_\ell>0$,
\item[(b)]  the strings of the form $\bar P^{i_1} I^{j_i} \bar P^{i_2} I^{j_2}  \dots \bar P^{i_\ell} I^{j_\ell}$, where $j_1,j_2,\dots,j_\ell>0$,
\item[(c)]  the string $P^{\frac{n}2}$ (which can only occur when $n$ is even),  
\item[(d)] the fixed state $I^n$. 
\end{enumerate}
\end{theorem}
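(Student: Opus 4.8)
The plan is to prove the two inclusions separately.

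\emph{Sufficiency.} First I would check that $f$ acts transparently on the four families, so that each listed state is periodic. For a word $w=P^{i_1}I^{j_1}\cdots P^{i_\ell}I^{j_\ell}$ with every $j_s>0$, each entry equal to $2$ is the right half of some $P$, hence is preceded by a $0$ and followed by a $0$ or a $1$; running the rule then shows that $f(w)$ is precisely the cyclic rotation of $w$ by one position (in the direction carrying the $P$'s forward), so $w$ is periodic with period dividing $n$. The family in (b) is the left--right mirror image and rotates the other way; $f(P^{n/2})=\bar P^{n/2}$ is the one-step rotation of $P^{n/2}$, so its period is $2$; and $f(I^n)=I^n$.

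\emph{Necessity; first bounds.} Let $S$ be periodic of period $p$ and write $S_t=f^t(S)$; if $p=1$ then $S=I^n$, so assume $p>1$. I would first establish the crude bound $c_i(t)\le 3$ for all $i,t$: a child that fires at time $t$ satisfies $c_i(t{+}1)=c_i(t)-2+[c_{i-1}(t)\ge 2]+[c_{i+1}(t)\ge 2]\le c_i(t)$, while a child that does not fire has $c_i(t)\le 1$, so $c_i(t{+}1)\le 3$; hence $\max_i c_i$ is eventually, and so by periodicity always, at most $3$. I would also record that over one period each child fires the same number $\phi$ of times, since the net change $-2\phi_i+\phi_{i-1}+\phi_{i+1}$ of $c_i$ over the period vanishes, forcing the harmonic vector $(\phi_i)$ on the connected cycle to be constant; in particular the number of $2$'s in $S_t$ is the number of children firing at time $t$.

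\emph{The main obstacle} is upgrading this to the sharp bound $c_i(t)\le 2$ for all $i,t$, together with: no two adjacent entries of any $S_t$ are both $\ge 2$. A value $3$ in $S_t$ arises only from a $2,1,2$ pattern in $S_{t-1}$, and two adjacent entries $\ge 2$ are likewise "unstable", so I expect the right statement to be that once such a local pattern appears the orbit is forced down to $I^n$, contradicting $p>1$. Proving this cleanly is the crux. The obvious monovariants ($\sum_i c_i^2$, the total variation $\sum_i|c_{i+1}-c_i|$, or $\#\{i:c_i\ne 1\}$) all fail to be monotone step by step, so I would look either for a subtler potential or for a direct combinatorial analysis of how the "$3$"s and the adjacent $\ge 2$ pairs move and annihilate, tracking the spatial extent of the disturbance they create and showing it cannot recur.

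\emph{Classification, granting the sharp bound.} Then each $S_t$ has entries in $\{0,1,2\}$ with the $2$'s pairwise non-adjacent; since $\sum_i(c_i-1)=0$ with each summand in $\{-1,0,1\}$, there are equally many $0$'s and $2$'s, say $r$, and $S$ is $r$ separator-$2$'s with a $\{0,1\}$-block between each consecutive pair. A block containing no $0$ (a pattern $2\,1^k\,2$) would, on iteration, produce a $2,1,2$ or a pair of adjacent $2$'s within a few steps, so every block contains a $0$. A short computation shows that $f$ empties every separator $2$ and creates a new $2$ at each extreme $1$ of each block (and at the $0$ of any singleton block), so the number of $2$'s in $S_{t+1}$ equals $r$ plus the number of blocks whose $0$ is strictly interior; hence the $2$-count is non-decreasing, hence constant by periodicity, forcing each block to have exactly one $0$, sitting at an end. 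A final direction argument then excludes a mixture of "left-end" and "right-end" blocks (they would be oppositely travelling waves, which collide and again recreate an interior $0$ or an illegal pattern). What survives is exactly: all non-trivial blocks of shape $1^a0$, so $S$ is a rotation of $P^{i_1}I^{j_1}\cdots P^{i_\ell}I^{j_\ell}$ with all $j_s>0$, giving (a); all of shape $01^b$, giving (b); every block a single $0$, so $S=P^{n/2}$ with $n$ even, giving (c); or $r=0$, so $S=I^n$, giving (d).
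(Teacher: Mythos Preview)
You have correctly located the crux and are candid that you do not have it: the step upgrading the crude bound to ``every periodic state has all entries at most $2$, with no two adjacent $2$'s.'' This is a genuine gap, and the paper supplies exactly the ``subtler potential'' you were searching for. For $S=(c_1,\dots,c_n)$, define its \emph{index} to be the sum, over all contiguous cyclic substrings $(c_i,\dots,c_{i+k-1})$ with $1\le k\le n$, of the \emph{deficiency} $\max\bigl(0,\,k-(c_i+\cdots+c_{i+k-1})\bigr)$. The key lemma is local: if a \emph{single} child holding $c\ge 2$ candies shares, the index does not increase, and it strictly decreases whenever $c\ge 3$. (One proves this by pairing each substring through that child with the substring obtained by deleting that child, and checking the two deficiencies together.) Since $f(S)$ does not depend on the order in which firing children act, let them fire one at a time: on a periodic orbit the index is constant, so every firing child has exactly two candies, giving $\max\le 2$. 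Two adjacent $2$'s are then impossible, because letting one of them fire first turns its neighbour into a $3$, forcing a strict drop at the next firing; the pattern $2,1,2$ is ruled out by the same trick applied to $f(S)$.

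Once the sharp bound is in hand, your classification takes a different but legitimate route from the paper's. You push \emph{forward}, counting that the number of $2$'s is nondecreasing and hence constant, which pins the unique $0$ of each block to an endpoint; the paper instead looks \emph{backward}, using that a periodic $S$ has a periodic predecessor $T$ with $f(T)=S$, so a $(1,2,1)$ in $S$ is impossible because none of those three positions in $T$ can be a $2$. Your final ``direction'' step is right in spirit but needs more than a sentence: a right-end block immediately followed by a left-end block (both of length $\ge 2$) does produce, after one application of $f$, a block with an interior $0$; but if singleton blocks sit between them the collision may take several iterations, and you should argue why it must eventually occur. The paper sidesteps this by getting ``each $2$ is adjacent to a $0$'' directly from the predecessor argument, after which the form of $S$ is read off without any collision analysis.
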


The proof of Theorem~\ref{th} employs the ideas of \cite[pp.~252--253]{T}. In particular, it uses the notion of the \emph{index} of a state $S=(c_1,\dots,c_n)$.
Consider the substrings $(c_i,c_{i+1},\dots,c_{i+k-1})$ of $S$ of length $k$, where $1\leq k \leq n $, and here and throughout the paper, the subscripts are taken modulo $n$. The \emph{deficiency} of such a substring is defined to be
\[
\begin{cases}k-(c_i+c_{i+1}+\dots+c_{i+k-1}) &:\ \text{if}\ c_i+c_{i+1}+\dots+c_{i+k-1}\leq k\\
0&:\ \text{otherwise}.
\end{cases}
\]
Then the \emph{index} of $S$ is defined to be the sum of the deficiencies of all substrings of $S$. The key property of the index is that with each iteration of the game, the index either decreases or remains unchanged (in the context examined in \cite{T}, the index always decreased). To see this, suppose that in the state $S$, there are $i$ children with more than one candy each. It is convenient to consider the effect on the index of these $i$ children sharing one at a time. As the sharing takes place, the change in the index may temporarily depend on the order in which the children share, but the final outcome state, $f(S)$, and its index, do not depend on the order. So it suffices to show that the effect of one child sharing is to either decrease or maintain the index.  Without loss of generality, let us assume child number 1 shares; so $c_1\geq 2$. We will consider the effect on the deficiencies of all the strings. First note that if a string includes  $c_1,c_2$ and $c_n$, then sharing by child 1 will have no effect on the deficiency of the string. Similarly,  if a string doesn't include any of   $c_1,c_2$ or $c_n$, then sharing by child 1 will have no effect on the deficiency of the string. 

Now consider the string $(c_1)$ itself. After sharing, this becomes $(c_1-2)$. So the deficiency of this string only increases if $c_1=2$, and in this case the deficiency of this string increases by $1$. Consider the complementary string $(c_2,c_3,\dots,c_n)$. It has deficiency $c_1-1$, which is positive. After sharing, the string becomes $(c_2+1,c_3,\dots,c_n+1)$, which has deficiency $c_1-3$ if $c_1>3$, and $0$ otherwise. So together the deficiency of the pair $(c_1)$, $(c_2,c_3,\dots,c_n)$ decreases if $c_1>2$, and it remains constant if and only if $c_1=2$. 

Next we consider strings that include $c_1$ and either $c_2$ or $c_n$, but not both.  Without loss of generality, we need only consider strings of the form $(c_1,c_2,\dots,c_k)$, for some $k$ with $1<k<n$. When child 1 shares, the deficiency of the string increases by 1 if $c_1+c_{2}+\dots+c_{k}\leq k$, and remains zero otherwise. Consider the string  $(c_2,c_3,\dots,c_k)$. When child 1 shares, the deficiency of this string decreases by 1 if $c_2+c_{3}+\dots+c_{k}\leq k-2$, and remains zero otherwise. Notice that since $c_1\geq 2$, we have
\[
c_1+c_{2}+\dots+c_{k}\leq k \implies c_2+c_{3}+\dots+c_{k}\leq k-2.
\]
So together the deficiency of the pair $(c_1,c_2,\dots,c_k)$, $(c_2,c_3,\dots,c_k)$ either decreases or remains constant, and it 
remains constant if and only if $c_1=2$. 

We have considered all possible strings, so let us summarize the conclusions of the above discussion.

\begin{proposition}\label{Prop} When one child shares their candies, the index either decreases or remains constant, and it remains constant only when the child who is sharing has exactly 2 candies.
\end{proposition}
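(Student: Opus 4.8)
The proposition simply records the outcome of the single-share analysis carried out in the paragraphs above, so the only real work in a proof is to make explicit that that analysis is exhaustive and then to add up the contributions. Assume without loss of generality that child~$1$ shares, so $c_1 \ge 2$; recall that the share replaces $c_1$ by $c_1 - 2$, $c_2$ by $c_2 + 1$ and $c_n$ by $c_n + 1$, leaving every other entry fixed. I would begin by partitioning the collection of all cyclic substrings of $S$ into four classes, according to which of the positions $1$, $2$, $n$ a substring contains: (i) those containing all three of $c_1, c_2, c_n$, together with those containing none of them; (ii) the singleton $(c_1)$ together with its complement $(c_2, c_3, \dots, c_n)$; (iii) the substrings containing $c_1$ and exactly one of $c_2, c_n$ — these are the strings $(c_1, c_2, \dots, c_k)$ with $1 < k < n$ and their mirror images; and (iv) the substrings containing exactly one of $c_2, c_n$ but not $c_1$, which are precisely the strings $(c_2, c_3, \dots, c_k)$ with $1 < k < n$ and their mirror images, each paired naturally with a string from class (iii).

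Next I would run through the classes. For class (i) both the length and the total of the substring are unchanged — the total changes by $-2 + 1 + 1 = 0$, or not at all — so its deficiency is unchanged. For class (ii) I would pair $(c_1)$ with $(c_2, \dots, c_n)$, reusing the computation above: their combined deficiency drops by exactly $2$ when $c_1 \ge 3$ and is unchanged when $c_1 = 2$. For the paired classes (iii) and (iv) I would pair $(c_1, c_2, \dots, c_k)$ with $(c_2, c_3, \dots, c_k)$ and invoke the implication $c_1 + \dots + c_k \le k \Rightarrow c_2 + \dots + c_k \le k - 2$, which holds because $c_1 \ge 2$; this forces the combined deficiency of each such pair to be non-increasing, and unchanged whenever $c_1 = 2$, with the mirror images behaving identically. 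Summing the four classes: the index never increases, which is the first assertion; and if $c_1 \ge 3$ then class (ii) by itself already makes the total drop, so the index is unchanged only when $c_1 = 2$, which is the second assertion.

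The obstacle, such as it is, is purely bookkeeping: one must check that the four classes genuinely partition the set of all cyclic substrings of $S$, so that no substring is counted twice or left out, and one must handle the degenerate endpoints $k = 2$ and $k = n-1$ in class (iii) (where the companion string in class (iv) is a singleton, resp.\ shrinks to $(c_2, \dots, c_{n-1})$). There is nothing subtle beyond this; every individual deficiency change is a one-line comparison of two values of $\max(\text{length} - \text{sum}, 0)$, exactly as performed in the discussion preceding the statement.
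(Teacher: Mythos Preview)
Your proposal is correct and follows essentially the same approach as the paper: the paper's discussion preceding the proposition carries out exactly this case analysis (strings containing all or none of $c_1,c_2,c_n$; the pair $(c_1)$ and its complement; the strings $(c_1,\dots,c_k)$ paired with $(c_2,\dots,c_k)$ and their reflections), and your write-up simply makes the exhaustiveness of the partition and the pairing between classes~(iii) and~(iv) more explicit. Your observation that the class~(ii) pair alone forces a strict drop of~$2$ whenever $c_1\ge 3$ is precisely how the paper obtains the ``only when $c_1=2$'' clause.
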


\begin{corollary}\label{Cor} At each iteration of the game, the index either decreases or remains constant. Moreover, if the index of a state $S$ equals the index of the next state $f(S)$, then the maximum element of $S$ is 2.
\end{corollary}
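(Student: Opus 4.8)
The plan is to deduce Corollary~\ref{Cor} directly from Proposition~\ref{Prop} by decomposing a single iteration of the game into a sequence of individual shares. Recall from the discussion preceding Proposition~\ref{Prop} that $f(S)$ can be reached from $S$ by letting the children who hold at least $2$ candies in $S$ share \emph{one at a time}, in any order, and that although the index may fluctuate along the way, the final state $f(S)$ --- and hence its index --- is independent of the chosen order. So I would fix an admissible order and record the chain of intermediate states $S = S_0, S_1, \dots, S_r = f(S)$, in which each $S_{t+1}$ is obtained from $S_t$ by one child (one of those holding $\geq 2$ candies in $S$) sharing.

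For the first assertion, apply Proposition~\ref{Prop} at each step: the index of $S_{t+1}$ is at most that of $S_t$ for every $t$, so telescoping over $t=0,\dots,r-1$ gives that the index of $f(S)$ is at most that of $S$. The one point to verify is that Proposition~\ref{Prop} genuinely applies at each step, i.e.\ that a child scheduled to share still holds at least $2$ candies when its turn arrives; this is immediate, since a share by a neighbour only increases a child's count, so a child with $\geq 2$ candies in $S$ still has $\geq 2$ in every $S_t$ preceding its own share.

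For the second assertion I would establish the contrapositive of the relevant implication. Suppose the maximum element of $S$ exceeds $2$, and choose a child $k$ with $c_k\geq 3$. Ordering the shares so that child $k$ goes first, the passage from $S_0=S$ to $S_1$ is a single share by a child holding $c_k\geq 3>2$ candies, so by the ``moreover'' clause of Proposition~\ref{Prop} the index of $S_1$ is \emph{strictly} smaller than that of $S$; since all later steps are index-nonincreasing, the index of $f(S)$ is strictly smaller than that of $S$. Consequently, if the indices of $S$ and $f(S)$ agree, the maximum element of $S$ is at most $2$; and because $\sum_i c_i=n$ forces some $c_i\geq 2$ whenever $S\neq I^n$, the maximum is exactly $2$ in every non-fixed case (the fixed state $I^n$ being the lone, and harmless, exception to the literal phrasing of the corollary).

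The argument is routine once Proposition~\ref{Prop} is available, so I do not anticipate a real obstacle; the only subtlety --- already handled in the preparatory discussion --- is the order-dependence of the intermediate index. The resolution is that Proposition~\ref{Prop} concerns a \emph{single} share from an \emph{arbitrary} configuration, hence can be invoked step by step along any fixed order, whereas the quantities actually being compared, the indices of $S$ and of $f(S)$, are the order-independent endpoints of the chain.
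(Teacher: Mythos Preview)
Your proof is correct and follows exactly the approach the paper intends: the Corollary is not given a separate proof, but is meant to be an immediate consequence of Proposition~\ref{Prop} together with the decomposition of a full iteration into individual shares already set up in the discussion preceding that Proposition. Your remark about the fixed state $I^n$ is also apt --- the literal phrasing of the Corollary should read ``at most $2$,'' as the paper's own proof of Theorem~\ref{th} later handles the case where $S$ contains no $2$s separately.
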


We draw the reader's attention to a subtlety: the above discussion does not imply that if the maximum element of a state $S$ is 2, then the index of the next state $f(S)$ equals the index of $S$. Consider the case where $n=4$ and let $S=(2,2,0,0)$. This state has index 6; there are 4 strings of deficiency 1, and one string deficiency 2. One has $f(S)=(1,1,1,1)$, which has index 0. If child 1 shares first, the result obtained from state $S$ is the state $S'=(0,3,0,1)$. This state has index 6, the same as the index of $S$, as it must be by Proposition \ref{Prop}. But while the maximum element in $S$ is 2, the state $S'$ has maximum element 3. Continuing from $S'$, if child 2 now shares, the result is the state $f(S)=(1,1,1,1)$. See Figure \ref{fig2}.

\begin{figure}[h]
\begin{center}
\begin{tikzpicture}[scale=0.45,auto];
\pgfmathsetmacro {\r}{2.3};
\pgfmathsetmacro {\t}{2.8};
\pgfmathsetmacro {\s}{3.3};
\draw[thick,red] (0,0) circle (2);

\pgfmathsetmacro {\a}{0};

\draw[rotate=\a,fill,blue] (0,\r) ellipse (.35 and .2);
\draw[rotate=\a-10,fill] (0,\r)--(.2,\r+.2)--(.2,\r-.2);
\draw[rotate=\a+10,fill] (0,\r)--(-.2,\r+.2)--(-.2,\r-.2);

\pgfmathsetmacro {\a}{90};

\draw[rotate=\a,fill,blue] (0,\r) ellipse (.35 and .2);
\draw[rotate=\a-10,fill] (0,\r)--(.2,\r+.2)--(.2,\r-.2);
\draw[rotate=\a+10,fill] (0,\r)--(-.2,\r+.2)--(-.2,\r-.2);

\pgfmathsetmacro {\a}{180};

\draw[rotate=\a,fill,blue] (0,\r) ellipse (.35 and .2);
\draw[rotate=\a-10,fill] (0,\r)--(.2,\r+.2)--(.2,\r-.2);
\draw[rotate=\a+10,fill] (0,\r)--(-.2,\r+.2)--(-.2,\r-.2);

\pgfmathsetmacro {\a}{270};

\draw[rotate=\a,fill,blue] (0,\r) ellipse (.35 and .2);
\draw[rotate=\a-10,fill] (0,\r)--(.2,\r+.2)--(.2,\r-.2);
\draw[rotate=\a+10,fill] (0,\r)--(-.2,\r+.2)--(-.2,\r-.2);

\pgfmathsetmacro {\cx}{6};
\pgfmathsetmacro {\cy}{6*sqrt(3)};
\draw[thick,red] (\cx,\cy) circle (2);

\pgfmathsetmacro {\a}{90};
\draw[rotate around ={\a:(\cx,\cy)},fill,blue] (\cx,\cy+\r) ellipse (.35 and .2);
\draw[rotate around ={\a-10:(\cx,\cy)},fill,blue]  (\cx,\cy+\r)--(\cx+.2,\cy+\r+.2)--(\cx+.2,\cy+\r-.2);
\draw[rotate around ={\a+10:(\cx,\cy)},fill,blue] (\cx,\cy+\r)--(\cx+-.2,\cy+\r+.2)--(\cx+-.2,\cy+\r-.2);

\pgfmathsetmacro {\a}{-90};
\draw[rotate around ={\a:(\cx,\cy)},fill,blue] (\cx,\cy+\r) ellipse (.35 and .2);
\draw[rotate around ={\a-10:(\cx,\cy)},fill,blue]  (\cx,\cy+\r)--(\cx+.2,\cy+\r+.2)--(\cx+.2,\cy+\r-.2);
\draw[rotate around ={\a+10:(\cx,\cy)},fill,blue] (\cx,\cy+\r)--(\cx+-.2,\cy+\r+.2)--(\cx+-.2,\cy+\r-.2);

\pgfmathsetmacro {\a}{-90};
\draw[rotate around ={\a:(\cx,\cy)},fill,blue] (\cx,\cy+\t) ellipse (.35 and .2);
\draw[rotate around ={\a-8:(\cx,\cy)},fill,blue]  (\cx,\cy+\t)--(\cx+.2,\cy+\t+.2)--(\cx+.2,\cy+\t-.2);
\draw[rotate around ={\a+8:(\cx,\cy)},fill,blue] (\cx,\cy+\t)--(\cx+-.2,\cy+\t+.2)--(\cx+-.2,\cy+\t-.2);

\pgfmathsetmacro {\a}{-90};
\draw[rotate around ={\a:(\cx,\cy)},fill,blue] (\cx,\cy+\s) ellipse (.35 and .2);
\draw[rotate around ={\a-6.5:(\cx,\cy)},fill,blue]  (\cx,\cy+\s)--(\cx+.2,\cy+\s+.2)--(\cx+.2,\cy+\s-.2);
\draw[rotate around ={\a+6.5:(\cx,\cy)},fill,blue] (\cx,\cy+\s)--(\cx+-.2,\cy+\s+.2)--(\cx+-.2,\cy+\s-.2);

\pgfmathsetmacro {\cx}{-6};
\pgfmathsetmacro {\cy}{6*sqrt(3)};
\draw[thick,red] (\cx,\cy) circle (2);

\pgfmathsetmacro {\a}{0};
\draw[rotate around ={\a:(\cx,\cy)},fill,blue] (\cx,\cy+\r) ellipse (.35 and .2);
\draw[rotate around ={\a-10:(\cx,\cy)},fill,blue]  (\cx,\cy+\r)--(\cx+.2,\cy+\r+.2)--(\cx+.2,\cy+\r-.2);
\draw[rotate around ={\a+10:(\cx,\cy)},fill,blue] (\cx,\cy+\r)--(\cx+-.2,\cy+\r+.2)--(\cx+-.2,\cy+\r-.2);

\pgfmathsetmacro {\a}{0};
\draw[rotate around ={\a:(\cx,\cy)},fill,blue] (\cx,\cy+\t) ellipse (.35 and .2);
\draw[rotate around ={\a-8:(\cx,\cy)},fill,blue]  (\cx,\cy+\t)--(\cx+.2,\cy+\t+.2)--(\cx+.2,\cy+\t-.2);
\draw[rotate around ={\a+8:(\cx,\cy)},fill,blue] (\cx,\cy+\t)--(\cx+-.2,\cy+\t+.2)--(\cx+-.2,\cy+\t-.2);

\pgfmathsetmacro {\a}{-90};
\draw[rotate around ={\a:(\cx,\cy)},fill,blue] (\cx,\cy+\r) ellipse (.35 and .2);
\draw[rotate around ={\a-10:(\cx,\cy)},fill,blue]  (\cx,\cy+\r)--(\cx+.2,\cy+\r+.2)--(\cx+.2,\cy+\r-.2);
\draw[rotate around ={\a+10:(\cx,\cy)},fill,blue] (\cx,\cy+\r)--(\cx+-.2,\cy+\r+.2)--(\cx+-.2,\cy+\r-.2);

\pgfmathsetmacro {\a}{-90};
\draw[rotate around ={\a:(\cx,\cy)},fill,blue] (\cx,\cy+\t) ellipse (.35 and .2);
\draw[rotate around ={\a-8:(\cx,\cy)},fill,blue]  (\cx,\cy+\t)--(\cx+.2,\cy+\t+.2)--(\cx+.2,\cy+\t-.2);
\draw[rotate around ={\a+8:(\cx,\cy)},fill,blue] (\cx,\cy+\t)--(\cx+-.2,\cy+\t+.2)--(\cx+-.2,\cy+\t-.2);

   \draw (-4,{4*sqrt(3)})  edge[->,thick] node[anchor=east] {$f$} (-2,{2*sqrt(3)});
   \draw (4,{4*sqrt(3)})  edge[->,thick]  (2,{2*sqrt(3)});
   \draw (-2,{6*sqrt(3)})  edge[->,thick]  (2,{6*sqrt(3)});

\end{tikzpicture}
\end{center}
\caption{An index lowering example}\label{fig2}
\end{figure}
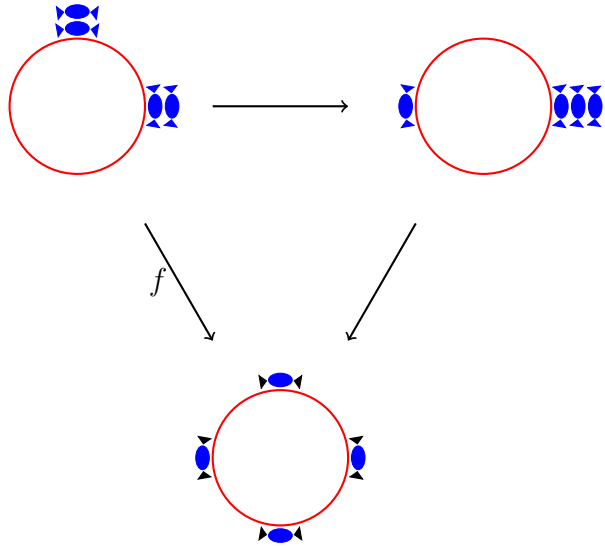

\begin{proof}[Proof of Theorem~\ref{th}] Suppose that $S=(c_1,\dots,c_n)$ is a periodic state. Since the index of $S$ is a nonnegative integer, and it never increases at any step in the game, it must be that the index is constant on the cycle defined by $S$. So, by the above corollary,  the maximum element of $S$ is 2.

Notice that there cannot be consecutive 2s in $S$. Indeed, arguing by contradiction, suppose that $c_i=c_{i+1}=2$ and consider what happens if only child $i$ shares. By the above proposition, this doesn't change the index, but now the $(i+1)$-st element is 3. Then if child $i+1$ shares, the index decreases, again by the above proposition. Consequently $f(S)$ has lower index than $S$, which is a contradiction.

Now notice that $S$ cannot contain a string equal to $(2,1,2)$. Indeed, if it did, $f(S)$ would contain a string of the form $(*,3,*)$, and so $f(f(S))$ would have lower index, by the above proposition.

 Since the sum of the elements of $S$ is $n$, the set $S$ contains the same number of 0s as 2s. We claim that the 0s and 2s alternate around the circle. Indeed, otherwise $S$ would contain a string of the form $(2)I^k (2)$, where the number $k$ of 1s separating the 2s is possibly zero. After one  iteration of the game, the string has the form $(*,2)I^{k-2} (2,*)$.
Further iterations would consequently ultimately lead either to two consecutive 2s, or a string equal to $(2,1,2)$, depending on whether $k$ is even or odd, respectively. But we have just seen that both of these possibilities are impossible.

Now notice that if a periodic  state has a 2 in position $i$, then because there are no consecutive 2s, after the next iteration of the game, there will be a 0 in position $i$. 
We use this observation to show that each $2$ is adjacent to a $0$. Indeed, otherwise $S$ would have a string equal to $(a_i,a_{i+1},a_{i+2})=(1,2,1)$, because there are no consecutive 2s. But since $S$ is periodic, there is a periodic state, $T=(b_1,b_2,\dots,b_n)$ say, with $f(T)=S$. But as we have just observed, if $b_k=2$ for some $k$, then $a_k=0$. Thus none of the entries $b_i,b_{i+1},b_{i+2}$ is equal to $2$; that is, they are each 0 or 1. But then, as $f(T)=S$, it is impossible that $a_{i+1}=2$. Thus each $2$ is adjacent to a $0$. Since there is the same number of 0s as 2s, we can also conclude that each $0$ is adjacent to a $2$.

Let us summarize our three main findings so far:
\begin{enumerate}
\item The periodic state $S$ consists only of 0s, 1s and 2s. 
\item The 0s and 2s alternate around the circle. 
\item Each 2 is adjacent to a 0, and each $0$ is adjacent to a $2$. 
\end{enumerate}

We are now ready to conclude the argument. If $S$ has no 2s, then $S$ consists entirely of 1s; that is, $S= I^n$. 
If $S$  has a 0 followed by a 2, then by cyclically permuting $S$, we may assume that $c_1=0, c_2=2$. Because the 0s and 2s alternate around the circle, $S$ has the form
$(0,2)I^{k_1}(0,*,\dots)$, for some $k_1\geq0$. So, since each 0 is adjacent to a 2, the state $S$ has the form
$(0,2)I^k(0,2,*,\dots)$. Thus, because the 0s and 2s alternate around the circle and each 0 is adjacent to a 2, $S$ has the form $(0,2)I^{k_1}(0,2)I^{k_2}(0,2)\dots$, for some $k_1,k_2\geq0$. Continuing in the same manner, we see that $S=(0,2)I^{k_1}(0,2)I^{k_2}\dots(0,2)I^{k_j}$, for some natural number $j$ and integers $k_1,k_2,\dots,k_j\geq 0$. Since some of the $k_i$ may be zero, we conclude that either $S=P^{\frac{n}2}$, or  $S=P^{i_1} I^{j_i} P^{i_2} I^{j_2}  \dots P^{i_\ell} I^{j_\ell}$, where $j_1,j_2,\dots,j_\ell>0$.

If $S$  has a 2 followed by a 0, then an entirely analogous argument shows that either $S=P^{\frac{n}2}$, or  $S=\bar P^{i_1} I^{j_i} \bar P^{i_2} I^{j_2}  \dots \bar P^{i_\ell} I^{j_\ell}$, where $j_1,j_2,\dots,j_\ell>0$.
\end{proof}

\section{Determining the outcome.}

For a particular starting state $S$, Theorem~\ref{Thsym} gives a finite number of possible periodic outcomes. 

\begin{definition}
We say that a state $S$ is \emph{clockwise biased},  \emph{anti-clockwise biased}, \emph{equivocal}, or \emph{ultimately equitable} if the game starting at $S$ results in a periodic state described  respectively in parts (a), (b), (c), (d) of Theorem~\ref{th}.
\end{definition}

The problem we examine now is to determine, for a given state $S$, whether  $S$ is clockwise biased, anti-clockwise biased,  equivocal or ultimately equitable.
For this purpose we introduce  an invariant. 

\begin{definition}
For a state $S=(a_1,a_2,\dots,a_n)$, let $\tau(S)$ be the element of the set $\{0,1,2,\dots,n-1\}$ defined by 
\[
\tau(S)\equiv n\frac{(-1)^n+1}4+\sum_{i=1}^n ia_i \pmod n.
\]
\end{definition}

\begin{remark}\label{Re}
Note that $\tau(I^n)= 0$, for all $n$. 
Indeed, if  $n$ is even, $n=2k$ say, then $\tau(I^n)\equiv k+\sum_{i=1}^n i= k+ k(2k+1)=2k(k+1)\equiv 0 \pmod n$. 
If  $n$ is odd, $n=2k+1$ say, then $\tau(I^n)\equiv\sum_{i=1}^n i= (k+1)(2k+1)\equiv 0 \pmod n$. 
\end{remark}

The effect of the rotational permutation $\sigma: (a_1,a_2,\dots,a_n) \mapsto (a_2,a_3,\dots,a_n,a_1)$ on $\tau(S)$ is as follows:
\begin{align*}
\tau(\sigma(S))&\equiv n\frac{(-1)^n+1}4+\sum_{i=1}^n ia_{i+1} =n\frac{(-1)^n+1}4+\sum_{i=1}^n (i-1)a_{i} \\
&\equiv\tau(S)-\sum_{i=1}^n a_{i} =\tau(S)- n \equiv \tau(S)\pmod n  .
\end{align*}
In particular,  $\tau(S)$ doesn't depend on where we start to number the children's positions. Furthermore, we claim  that the sharing process leaves $\tau(S)$ unchanged. 
To see this, since $\tau(S)$  is unchanged by cyclic rotations, it suffices to show that  $\tau(S)$ is unchanged if $a_2\geq 2$ and child 2 shares. After sharing, the state is $(a_1+1,a_2-2,a_3+1,a_4,\dots,a_n)$, and we have
\[
\tau(a_1+1,a_2-2,a_3+1,a_4,\dots,a_n)= \tau(S) +1-2\cdot 2+3=\tau(S).
\]

\begin{theorem}\label{Thout} Suppose that the balanced candy sharing game with $n$ children starts in an initial state $S$ and results in a periodic state $T$. 
Then 
\begin{enumerate}[\rm(a)]
\item[(a)]  $S$ is clockwise biased if  $0<\tau(S)<\frac{n}2$, and 
in this case $\tau(S)$  is the number of $P$s  in $T$,
\item[(b)]  $S$ is anti-clockwise biased if  $\frac{n}2<\tau(S)<n$, and 
in this case $n-\tau(S)$  is the number of $\bar P$s  in $T$.
\item[(c)]  $S$ is equivocal  if  $\tau(S)=\frac{n}2$, 
\item[(d)]  $S$ is ultimately equitable  if  $\tau(S)=0$, 
\end{enumerate}
\end{theorem}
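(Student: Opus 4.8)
The plan is to exploit the two invariance properties of $\tau$ proved just above the statement: $\tau$ is unchanged under cyclic rotation and under the sharing process. Consequently $\tau(S)=\tau(T)$, so it suffices to evaluate $\tau$ on each of the four families of periodic states classified in Theorem~\ref{th} and to verify that the four resulting sets of values partition $\{0,1,\dots,n-1\}$ in exactly the way the theorem asserts. Rotation-invariance lets us compute $\tau$ on whatever cyclic representative is convenient.

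First I would record the bookkeeping identity: for any state $T=(a_1,\dots,a_n)$ with all $a_i\in\{0,1,2\}$, comparing $\sum_i i a_i$ with $\sum_i i$ entry by entry gives
\[
\tau(T)\ \equiv\ \tau(I^n)\ +\ \sum_{i\colon a_i=2} i\ -\ \sum_{i\colon a_i=0} i\ \pmod n,
\]
and $\tau(I^n)\equiv 0$ by Remark~\ref{Re}. In a state of form (a), each block $P=(0,2)$ places a $0$ in some position $p$ immediately followed by a $2$ in position $p+1$, so that block contributes $(p+1)-p\equiv 1\pmod n$ to the displayed sum — reducing mod $n$ is exactly what makes this valid for the single block that may straddle positions $n$ and $1$. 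Hence, if $T$ has form (a) with $k:=i_1+\dots+i_\ell$ copies of $P$, then $\tau(T)\equiv k\pmod n$; the mirror-image computation gives $\tau(T)\equiv -k\pmod n$ when $T$ has form (b) with $k$ copies of $\bar P$; one finds $\tau(T)\equiv n/2$ for $T=P^{n/2}$; and $\tau(T)=0$ for $T=I^n$.

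Next I would convert these congruences into genuine elements of $\{0,1,\dots,n-1\}$ using the exponent constraints. In forms (a) and (b) one has $k\ge 1$ and, since all $j_m>0$, also $2k<n$; thus $0<k<n/2$, so in case (a) $\tau(T)=k$ is exactly the number of $P$s in $T$ and lies in $\{1,\dots,\lceil n/2\rceil-1\}$, while in case (b) $\tau(T)=n-k$, so $n-\tau(T)$ is the number of $\bar P$s in $T$ and $\tau(T)$ lies in $\{\lfloor n/2\rfloor+1,\dots,n-1\}$. Case (c) requires $n$ even and gives $\tau(T)=n/2$; case (d) gives $\tau(T)=0$. These four value-sets are pairwise disjoint, their union is all of $\{0,1,\dots,n-1\}$, and when $n$ is odd the middle one is empty — consistent with part (c) of Theorem~\ref{th} occurring only for even $n$.

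To conclude: $T$ is periodic, hence of one of the four forms of Theorem~\ref{th}, and $\tau(S)=\tau(T)$ lands in the value-set attached to that form; conversely, the location of $\tau(S)$ among the four sets forces the form of $T$, and in cases (a) and (b) it additionally recovers the number of $P$s, respectively $\bar P$s. That is precisely the assertion of the theorem. The only step demanding care is the positional accounting in the computation of $\tau(T)$: a block $(0,2)$ or $(2,0)$ can wrap around the seam between positions $n$ and $1$, so ``position of the $2$ minus position of the $0$'' equals $1$ only modulo $n$ — which is the whole reason $\tau$ is defined modulo $n$; everything else is routine arithmetic.
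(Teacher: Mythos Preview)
Your proof is correct and follows essentially the same route as the paper's: invoke the invariance to reduce to computing $\tau(T)$, then subtract off $I^n$ (your bookkeeping identity is exactly the paper's $\tau(T)\equiv\tau(T-I^n)$) and observe that each $P$-block contributes $+1$ and each $\bar P$-block contributes $-1$ modulo $n$. Your write-up is slightly more careful than the paper's in explicitly deriving the range constraint $0<k<n/2$ from $2k+\sum j_m=n$ with all $j_m>0$, and in flagging the wraparound issue, but the underlying argument is the same.
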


\begin{proof}
Because of the invariance of $\tau$, we have $\tau(S)=\tau(T)$.
It remains to compute $\tau(T)$ in the four cases of Theorem~\ref{th}. 
By Remark~\ref{Re}, $\tau(I^n)=0$.
When $n$ is even, say $n=2k$, we have
\[
\tau(P^k)\equiv k+2 \sum_{i=1}^k 2i  =k+2k(k+1)   \equiv k \pmod n.
\]

Now let $T=P^{i_1} I^{j_i} P^{i_2} I^{j_2}  \dots P^{i_\ell} I^{j_\ell}$. To see that $\tau(T)=\sum_{j=1}^\ell i_j$, notice that since $\tau(I^n)=0$, we have
$\tau(T)\equiv \tau(T-I^n)\equiv  \sum_{i=1}^n ib_i \pmod n$, where 
\[
(b_1,b_2,\dots,b_n)=(-1,1)^{i_1} (0)^{j_i} (-1,1)^{i_2} (0)^{j_2}  \dots (-1,1)^{i_\ell}(0)^{j_\ell}.
\]
 Each pair $(-1,1)$ contributes 1 to the sum $\sum_{i=1}^n ib_i$.  So $\tau(T)=\sum_{j=1}^\ell i_j$ as required.

Similarly, for $T=\bar P^{i_1} I^{j_i} \bar P^{i_2} I^{j_2}  \dots \bar P^{i_\ell} I^{j_\ell}$, one has $\tau(T)\equiv - \sum_{j=1}^\ell i_j =n-\sum_{j=1}^\ell i_j$.
\end{proof}

\section{Symmetric distributions of candies.}

We conclude this study with a consideration of a special case.

\begin{theorem}\label{Thsym} Suppose that for the balanced candy sharing game with $n$ children, the initial distribution $S$ of candies is symmetrical about some diameter.
\begin{enumerate}[\rm(a)]
\item[(a)]  If  $n$ is odd, the  system is ultimately equitable.
\item[(b)]  If  $n$ is even, the  system is either ultimately equitable or equivocal, depending on whether  $\tau(S)=0$ or $\frac{n}2$ respectively.
\end{enumerate}
\end{theorem}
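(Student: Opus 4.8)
The plan is to reduce everything to a computation of the invariant $\tau$ and then quote Theorem~\ref{Thout}: it suffices to prove that $\tau(S)=0$ when $n$ is odd, and that $\tau(S)\in\{0,\tfrac n2\}$ when $n$ is even. Conceptually this is forced by symmetry: the sharing rule treats left and right alike, so $f$ commutes with any reflection $\rho$ in an axis of symmetry of the regular $n$-gon; hence if $\rho(S)=S$ then $\rho(f^k(S))=f^k(S)$ for all $k$, and in particular the periodic state eventually reached is symmetric about the same axis. Among the periodic states listed in Theorem~\ref{th}, those of types (a) and (b) are not symmetric about any diameter, since reversing the cyclic order turns $P=(0,2)$ into $\bar P=(2,0)$, so the reverse of a type-(a) string is a type-(b) string up to rotation, and by Theorem~\ref{Thout} a type-(a) and a type-(b) state have $\tau$ in the disjoint intervals $(0,\tfrac n2)$ and $(\tfrac n2,n)$. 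Thus a symmetric periodic state must be $I^n$ or $P^{n/2}$, with $\tau=0$ or $\tfrac n2$ respectively. I would mention this as motivation, but run the actual argument directly through $\tau$, which keeps it self-contained.

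The computation goes as follows. Since $\tau$ is unchanged by the rotation $\sigma$, and rotating a symmetric state merely rotates its axis, I may put the axis in standard position. If $n$ is odd the axis runs through one child and the midpoint of the opposite edge, so after relabelling $a_i=a_{n-i}$ for $1\le i\le n-1$. If $n$ is even there are two sub-cases: a vertex-type axis, which again gives $a_i=a_{n-i}$ for $1\le i\le n-1$, and an edge-type axis, which gives $a_i=a_{n+1-i}$ for $1\le i\le n$. In each case I pair up the terms of $\sum_{i=1}^n i\,a_i$ using the symmetry relation and the identity $\sum_i a_i=n$: the relation $a_i=a_{n-i}$ gives $2\sum_{i=1}^n i\,a_i=n(n+a_n)$, while $a_i=a_{n+1-i}$ gives $2\sum_{i=1}^n i\,a_i=n(n+1)$. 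Substituting into the definition of $\tau(S)$, one obtains $2\tau(S)\equiv 0\pmod n$ when $n$ is odd, whence $\tau(S)=0$ since $2$ is invertible mod $n$; and when $n$ is even one obtains $2\tau(S)\equiv 0$ or $n\pmod{2n}$, whence $\tau(S)\in\{0,\tfrac n2\}$. Theorem~\ref{Thout} then yields exactly parts (a) and (b).

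I expect the only real care to be needed in two bookkeeping points. First, in pairing the terms of $\sum i\,a_i$ one must separate out the fixed points of the reflection — the one or two children lying on the axis — before invoking the symmetry relation; in the even vertex-type case it is worth noting in passing that the two on-axis children have the same parity (since the total $n$ is even), so that the dichotomy $\tau(S)\in\{0,\tfrac n2\}$ is unambiguously tied to that parity. Second, the passage from $2\tau(S)$ to $\tau(S)$ must be carried out at the correct modulus: modulo $n$ when $n$ is odd (using $\gcd(2,n)=1$), and modulo $2n$ when $n$ is even, where one uses $n^2\equiv 0\pmod{2n}$ and keeps track of whether $n\cdot(\text{odd})$ or $n\cdot(\text{even})$ appears. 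One should also observe at the outset that ``symmetric about a diameter'' forces the diameter to be an axis of symmetry of the regular $n$-gon, so the three cases above are exhaustive.
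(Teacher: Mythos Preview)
Your proposal is correct, but the route you label ``motivation'' is in fact the paper's entire proof: the paper simply observes that symmetry about a diameter is preserved by $f$, notes that among the periodic states of Theorem~\ref{th} only $I^n$ and $P^{n/2}$ are symmetric, and quotes the values $\tau(I^n)=0$ and $\tau(P^{n/2})=\tfrac n2$. It does not compute $\tau(S)$ for a general symmetric $S$.

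Your main argument --- pairing terms in $\sum_i i\,a_i$ via $a_i=a_{n-i}$ or $a_i=a_{n+1-i}$ to force $2\tau(S)\equiv 0$ --- is a genuinely different, self-contained proof that bypasses the classification of periodic states entirely and appeals only to Theorem~\ref{Thout}. It even yields a small refinement the paper does not mention: for an edge-type axis one always gets $\tau(S)=0$, so edge-symmetric states are ultimately equitable regardless of parity. One simplification worth noting: rather than splitting into three cases, you can observe once and for all that any reflection $\rho$ satisfies $\tau(\rho(S))\equiv -\tau(S)\pmod n$ (since $\sum_i i\,a_{n+1-i}=(n+1)\sum_i a_i-\sum_i i\,a_i$ and $\tau$ is rotation-invariant), so $\rho(S)=S$ gives $2\tau(S)\equiv 0\pmod n$ immediately; this absorbs your three sub-cases and the mod-$2n$ bookkeeping into one line.
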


\begin{proof}
If the initial distribution of candies is symmetrical about some diameter, then this symmetry will persist throughout the sharing process. Of the possible periodic states given in Theorem~\ref{th}, the only symmetric ones are the fixed state $I^n$ and 
the state $P^{\frac{n}2}$ when $n$ is even. By Theorem~\ref{Thout}(b), $\tau(I^n)=0$ and $\tau(P^{\frac{n}2})=\frac{n}2$.
\end{proof}

\begin{definition}
A state in which one child has all the candies is called a \emph{monopoly state}.
\end{definition}

\begin{corollary}
In a balanced candy sharing game with $n$ candies,  monopoly states are ultimately equitable if and only if $n$ is odd.
\end{corollary}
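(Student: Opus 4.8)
The plan is to read off the answer directly from the invariant $\tau$ and Theorem~\ref{Thout}(d), which characterises the ultimately equitable states as exactly those with $\tau=0$. A monopoly state has one child holding all $n$ candies and every other child holding none; since $\tau$ is unchanged by the rotation $\sigma$, I may assume without loss of generality that it is child $n$ who holds everything, so in the notation $(a_1,\dots,a_n)$ the state is $S=(0,0,\dots,0,n)$.

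Next I would just evaluate the defining congruence for this $S$. The sum $\sum_{i=1}^n i\,a_i$ reduces to $n\cdot n\equiv 0\pmod n$, so
\[
\tau(S)\equiv n\,\frac{(-1)^n+1}{4}\pmod n .
\]
When $n$ is odd the right-hand side is $0$; when $n$ is even, say $n=2k$, it equals $2k\cdot\tfrac12=k=\tfrac n2$. Hence $\tau(S)=0$ if $n$ is odd and $\tau(S)=\tfrac n2$ if $n$ is even.

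Finally, apply Theorem~\ref{Thout}: the game started at $S$ is ultimately equitable exactly when $\tau(S)=0$, which by the previous step happens exactly when $n$ is odd, while for even $n$ the state is equivocal instead. As an alternative route, one can note that a monopoly state is symmetric about the diameter through the lone child holding the candies, so Theorem~\ref{Thsym} already narrows the outcome to ``ultimately equitable or equivocal,'' and the same computation of $\tau(S)$ then decides between them. There is no genuine obstacle in this argument; the only point requiring care is the parity-dependent correction term $n\frac{(-1)^n+1}{4}$ in the definition of $\tau$, which is precisely the feature that distinguishes odd $n$ from even $n$.
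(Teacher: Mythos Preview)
Your proof is correct and follows essentially the same approach as the paper: compute $\tau$ of a monopoly state and invoke the classification theorem. The only cosmetic difference is that the paper places the hoard at child~$1$ and appeals first to Theorem~\ref{Thsym} for the odd case, whereas you place it at child~$n$ (making $\sum i a_i\equiv 0$) so that the parity term $n\frac{(-1)^n+1}{4}$ alone decides the outcome via Theorem~\ref{Thout}.
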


\begin{proof}
If $n$ is odd, Theorem~\ref{Thsym} tells us that an ultimately equitable distribution of candies will be obtained. Suppose that $n$ is even,  and let $S$ be the monopoly state $S=(n,0,0,\dots,0)$. We have 
$\tau(S)\equiv \frac{n}2 +n\equiv \frac{n}2\pmod n$. 
So the result again follows from Theorem~\ref{Thsym}.
\end{proof}

\bigskip
\noindent
\emph{Thanks} 
The author is very grateful to Christian Aebi for bringing this interesting problem to his attention, carefully reading the paper, picking up typos, and making  suggestions that improved the exposition of the paper.


\begin{thebibliography}{1}

\bibitem{IT}
Glenn Iba and James Tanton, \emph{Candy sharing}, Amer. Math. Monthly
  \textbf{110} (2003), no.~1, 25--35.

\bibitem{KK}
Paul~Myer Kominers and Scott~Duke Kominers, \emph{A constant bound for the
  periods of parallel chip-firing games with many chips}, Arch. Math. (Basel)
  \textbf{95} (2010), no.~1, 9--13.

\bibitem{T}
James Tanton, \emph{Mathematics {G}alore!}, Mathematical Association of
  America, Washington, DC, 2012.

\end{thebibliography}
\end{document}